\documentclass[11pt,letterpaper]{amsart}
\usepackage[utf8]{inputenc}
\usepackage[russian,english]{babel}
\usepackage{xcolor}
\usepackage{graphicx}
\usepackage{geometry}
\usepackage{amsmath,amsfonts,amssymb, amsthm, url}

\newcommand{\ff}{\mathcal{F}}
\newcommand{\s}{\mathcal{S}}
\newcommand{\g}{\mathcal{G}}
\newcommand{\aaa}{\mathcal{A}}

\newcommand{\cc}{\mathcal{C}}

\newtheorem{thm}{Theorem}

\newtheorem{lem}[thm]{Lemma}

\date{}
\title{Intersection theorems for uniform subfamilies of hereditary families}
\author{Andrey Kupavskii}
\address{G-SCOP, CNRS, University Grenoble-Alpes, France; Moscow Institute of Physics and Technology, Russia, St. Petersburg State University; Email: {\tt kupavskii@ya.ru}}

\begin{document}

\maketitle
\begin{abstract}
  A family $\mathcal C$ of sets is hereditary if whenever $A\in \mathcal C$ and $B\subset A$, we have $B\in \mathcal C$. Chv\'atal conjectured that the largest intersecting subfamily of a hereditary family is the family of all sets containing a fixed element. This is a generalization of the non-uniform Erd\H os--Ko--Rado theorem.
  
  A natural uniform variant of this question, which is essentially a generalization for the uniform Erd\H os--Ko--Rado theorem, was suggested by Borg: given a hereditary family $\mathcal C$, in which all maximal sets have size at least $n$, what is the largest intersecting subfamily of the family of all $k$-element sets in $\mathcal C$? The answer, of course, depends on $n$ and $k$, and Borg conjectured that for $n\ge 2k$ the it is again the family of all $k$-element sets containing a singleton. Borg proved this conjecture for $n\ge k^3$. He also considered a $t$-intersecting variant of the question.
  
  In this paper, we improve the bound on $n$ for both intersecting and $t$-intersecting cases, showing that for $n\ge Ckt\log^2\frac nk$ and $n\ge Ck\log k$ the largest $t$-intersecting subfamily of the $k$-th layer of a hereditary family with maximal sets of size at least $n$ is the family of all sets containing a fixed $t$-element set. We also prove a stability result.
\end{abstract}

\section{Introduction}
In this note, we address a variant of the following classical problem. Assume that $\cc \subset 2^{[N]}$ is a simplicial complex (down-closed family). Let $\ff\subset \cc$ be intersecting. Chv\'atal \cite{Chv} asked for the following generalization of the famous Erd\H os--Ko--Rado theorem: is it true that the largest such family is the family of all sets in $\cc$ that contain some fixed element? In the notation introduced below, is it true that $|\ff|\le |\cc(x)|$ for some element $x\in [N]$?

A natural variant of the question by Chv\'atal was proposed by Borg \cite{Bor3}, in parallel with the uniform Erd\H os--Ko--Rado theorem. For a family $\aaa$, let $\aaa^{(k)}\subset \aaa$ be the subfamily of all $k$-element sets. We say that a simplicial complex has rank $\ge n$ if all  maximal sets in $\mathcal C$ have size at least $n$. Let $\mathcal C$ be a simplicial complex of rank $\ge n$ and  $\ff\subset \cc^{(k)}$ be intersecting. Is it true that $|\ff|\le \cc^{(k)}(x)$ for some $x\in[N]$, provided that $n\ge 2k$? Note that the conjectured bound is the same as in  the uniform Erd\H os--Ko--Rado theorem. Borg proved it for $n>(k-1)(k^2+1)+k$ \cite{Bor3,Bor5}. Another motivation for this question  lies in the fact that it generalizes a question raised by Holroyd and Talbot \cite{HT} on intersecting families of independent sets in graphs. Holroyd and Talbot studied families of  independent $k$-element sets in a fixed graph $G$ and asked, for which $k$ the largest intersecting family of such sets is a star. Holroyd and Talbot conjectured that it is the case if the smallest maximal independent set has size at least $2k$. The connection to the question of Borg can be readily seen if one  reformulates the latter question in terms of intersecting families of $k$-element sets in the independent set complex of $G$.

A natural generalization of this question is to consider  $t$-intersecting families. For ${[n]\choose k}$ we have a theorem due to Frankl \cite{F1} and Wilson \cite{Wil} that the largest $t$-intersecting family $\ff\subset {[n]\choose k}$ is a family of all sets containing a set $T$ of cardinality $t$, provided $n\ge (t+1)(n-t+1)$. Borg \cite{Bor3} conjectured that the analogue of this result holds for any simplicial complex of rank $n$, where $n\ge(t+1)(k-t+1)$.  The validity of the conjecture was shown by Borg \cite{Bor3} for $n\ge (k-t){3k-2t-1\choose t+1}+k$ and later improved by him in \cite{Bor5} to $n\ge (k-t)(k{k\choose t}+1)+k$. (The bound for $t=1$ cited above is a particular case of the latter bound.)

For a family $\aaa\subset 2^{[n]}$, sets $B, X,Y\subset [n]$, where $X\subset Y$, and a family $\s\subset 2^{[n]}$, we will use the following notation:
\begin{align*}
  \aaa(\bar B) = & \{F: F\in \aaa, F\cap B = \emptyset\} \\
  \aaa(B) =  &\{F\setminus B: F\in \aaa, B\subset F\} \\
  \aaa[B] =  &\{F: F\in \aaa, B\subset F\}\\
  \aaa[\s] =  &\bigcup_{B\in \s}\aaa[B]\\
  \aaa(X,Y)= &\{A\setminus X: A \in \aaa, A\cap Y = X\}
\end{align*}
Whenever $B = \{x\}$ for some element $x\in [n]$, we will write
$\aaa(x)$ instead of $\aaa(\{x\})$ etc.

In this note, we prove the following theorem that greatly improves the bounds of Borg and brings them pretty close to the conjectured thresholds.

\begin{thm}\label{thm1}
Let $N,n, k, t$ be positive integers that satisfy the following inequalities:  $n\ge 2^{13}k\log_2(2k)$ and $n\ge 2^{19}tk\log_2^2\frac nk.$   Let $\cc\subset 2^{[N]}$ be a simplicial complex of rank $\ge n$. If $\ff\subset \cc^{(k)}$ is $t$-intersecting then there exists a set $T$ of size $t$ in $[N]$ such that  $|\ff|\le |\cc^{(k)}(T)|$. Moreover, if $m = \min_{T\in {[N]\choose t}} |\ff(T)\setminus \cc^{(k)}(T)|$, then
$$|\ff|\le \max\Big\{\max_{T\in {[N]\choose t}}|\cc^{(k)}(T)|-m2^{2^{-20}\frac n{k\log_2 \frac nk}}, 0.6\max_{T\in {[N]\choose t}}|\cc^{(k)}(T)|\Big\}.$$
\end{thm}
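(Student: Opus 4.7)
The plan is to prove Theorem~\ref{thm1} via a spread approximation argument, a method the author has developed in related intersection problems; the $\log^2(n/k)$ factor in the hypothesis is characteristic of this technique. The heart of the argument is to extract from $\ff$ a small family $\s$ of ``root'' sets, each of size much smaller than $k$, such that: every $F\in\ff$ except a small remainder $\ff_0\subset\ff$ contains some $S\in\s$; and $\s$ is $r$-spread, meaning no set is contained in too many members of $\s$, with $r$ roughly $2^{c\,n/(k\log(n/k))}$. The rank hypothesis on $\cc$ ensures that each $S\in\s$ admits approximately ${n-|S|\choose k-|S|}$ extensions to a $k$-element set inside $\cc$, and this quantity is comparable to $|\cc^{(k)}(T)|$ for any $t$-set $T$; this is what makes the losses comparable to $|\cc^{(k)}(T)|$ rather than to the ambient ${N\choose k}$.

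I would then use the $t$-intersecting property of $\ff$ to show that $\bigcap_{S\in\s} S$ has size at least $t$. If instead some $S_1,S_2\in\s$ satisfied $|S_1\cap S_2|<t$, then spreadness of $\s$ together with the rank condition on $\cc$ allows one to greedily extend $S_1$ and $S_2$ to $F_1,F_2\in\ff$ whose ``new'' parts $F_1\setminus S_1,\,F_2\setminus S_2$ are disjoint, yielding $|F_1\cap F_2|=|S_1\cap S_2|<t$ and contradicting $t$-intersection. Once a $t$-set $T\subset \bigcap_{S\in\s}S$ is fixed, every $F\in\ff\setminus\ff_0$ contains $T$, so $|\ff|\le |\cc^{(k)}(T)|+|\ff_0|$. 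The first bound follows by tuning parameters so that $|\ff_0|\le 0.4\,|\cc^{(k)}(T)|$. For the stability bound, a sharper analysis of the approximation yields an exponential-in-$n/(k\log(n/k))$ savings per element of $\ff(T)\setminus\cc^{(k)}(T)$, producing the factor $2^{2^{-20}\,n/(k\log_2(n/k))}$, cut off at $0.6\max_T|\cc^{(k)}(T)|$ when $m$ is large enough that the subtraction would outstrip the main term.

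The main obstacle will be to carry out the spread approximation inside a hereditary family whose ground set $[N]$ is essentially unconstrained: standard spread approximation theorems live inside ${[n]\choose k}$, whereas here one works with varying maximal chains of $\cc$. The plan is to use the rank hypothesis to ``simulate'' an $n$-element ambient universe locally around each $F\in\ff$ via a chosen maximal set $M(F)\supset F$ with $|M(F)|\ge n$, while ensuring that the resulting root family $\s$ is coherent across different choices of $F$. Verifying that the extension step for $S_1,S_2$ can actually be executed inside $\cc$ rather than just in ${[n]\choose k}$, and that the spreadness is preserved under this local-to-global passage, is where the numerical hypotheses $n\ge 2^{13}k\log_2(2k)$ and $n\ge 2^{19}tk\log_2^2(n/k)$ are consumed: the first controls the overhead of the iterative extraction of $\s$, while the second provides enough room for the $t$-separation argument after all logarithmic losses are accounted for.
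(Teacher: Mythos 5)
Your outline captures the broad spread-approximation strategy that the paper actually uses, but there is a genuine gap at the central structural step, and the vaguest parts of your sketch are exactly where the paper has to work hardest.

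The key issue: after extracting the root family $\s$, you claim that the $t$-intersection property of $\ff$ implies $\bigcap_{S\in\s}S$ has size at least $t$. The argument you give (greedy/probabilistic extension of $S_1,S_2$ to $F_1,F_2\in\ff$ with disjoint new parts) only shows that $\s$ is \emph{pairwise} $t$-intersecting. A pairwise $t$-intersecting family of sets of size up to $q>t$ need not have a common $t$-element core (e.g.\ all $(t+1)$-subsets of a $(t+2)$-set). The paper does not attempt to force a common core; instead it invokes a separate theorem (Theorem~\ref{thmapproxtrivial}) stating that if the small $t$-intersecting root family $\s$ is \emph{non-trivial}, then $|\aaa[\s]|\le\varepsilon|\aaa[T]|$ for some $t$-set $T$. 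This is where the $0.6$ term in the conclusion comes from: it is the bound for the genuinely non-trivial case, not merely a cap applied when the stability subtraction ``outstrips the main term'' as you suggest. Without a substitute for this ingredient, your argument has no way to conclude in the case $|\bigcap_{S\in\s}S|<t$.

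A few secondary points. You say $\s$ itself is $r$-spread with $r\approx 2^{cn/(k\log(n/k))}$; this conflates two different quantities. In the spread approximation (Theorem~\ref{thmkz}) the spreadness parameter is $r=r_0/2=n/(2k)$ and it is the residual families $\ff_B(B)$, not $\s$, that are $r$-spread; the exponential quantity $2^{cn/(k\log(n/k))}$ only shows up as the bound on the remainder $|\ff'|$, obtained from $r^{q+1}r_0^{t-q-1}$ with $q\approx n/(k\log(n/k))$. Also, your proposed ``local simulation'' of an $n$-element universe around each $F\in\ff$ is not needed: the paper proves directly (Lemma~\ref{lemspread}, via a local-LYM double count using that every $F\in\cc^{(k)}$ sits inside a maximal set of size $\ge n$) that $\cc^{(k)}$ is $(\tfrac nk,k)$-spread, after which the spread approximation machinery applies to $\aaa=\cc^{(k)}$ with no further coherence argument. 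Finally, the stability bound with the factor $2^{2^{-20}n/(k\log_2(n/k))}$ per bad set is not a ``sharper analysis'' in a generic sense; the paper extracts a maximal spread piece $\ff''(U)$ of $\ff''=\ff\setminus\cc^{(k)}[T]$, shows via Lemma~\ref{lem4} that $|\cc^{(k)}(T,T\cup U)|$ is large compared to $|\ff''|$, and then uses a random two-coloring together with cross-intersection between $\ff''(U,T\cup U)$ and $\ff(T,T\cup U)$ to show that a positive fraction of $\cc^{(k)}(T,T\cup U)$ is missing from $\ff$. These steps need to be spelled out for the quantitative claim to follow.
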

We note that both conditions on $n$ are satisfied if one assumes $n\ge Ctk\log^2k$, where $C$ is some constant. In order to prove this theorem, we use and refine the method of spread approximations developed by Zakharov and the author \cite{KuZa}.

We note that this theorem improves some of the known results on the Holroyd and Talbot conjecture. In particular, it improves the results of Frankl and Hurlbert \cite{FHu} for bounded degree graphs and spider trees.

In the same vein, we can ask many other extremal questions for subfamilies of $\mathcal C$ and $\mathcal C^{(k)}$, such as: what is the largest subfamily avoiding intersection $t-1$? What is the largest family with no $s$-matching? The present method allows for such results.

\section{Preliminaries}

A family $\aaa$ is {\it $r$-spread} if for any set $X$ we have $|\aaa(X)|\le r^{-|X|}|\aaa|$.
Given a family $\aaa\subset 2^{[n]}$ of sets and $q,r\ge 1$, we say that $\aaa$ is {\it $(r,q)$-spread} if for each $S\in {[n]\choose \le q}$, the family $\aaa(S)$ is $r$-spread. Note that putting $S = \emptyset$ implies that $\aaa$ is $r$-spread, so $(r, q)$-spreadness is a stronger condition than the usual $r$-spreadness. A set $W\subset [n]$ is {\it $p$-random} if each element of $[n]$ is included in $W$ independently and with probability $p$.

The following statement is a variant due to Tao \cite{Tao} of the breakthrough result that was proved by Alweiss, Lowett, Wu and Zhang \cite{Alw}.

\begin{thm}[\cite{Alw}, a sharpening due to \cite{Tao}]\label{thmtao}
  If for some $n,r,k\ge 1$ a family $\ff\subset{[n]\le k}$ is $r$-spread and $W$ is an $(m\delta)$-random subset of $[n]$, then $$\Pr[\exists F\in \ff :\ F\subset W]\ge 1-\Big(\frac 5{\log_2(r\delta)} \Big)^mk.$$
\end{thm}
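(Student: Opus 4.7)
The plan is to prove the statement via the iterated random-sampling / encoding argument from Alweiss--Lovett--Wu--Zhang, with the constants sharpened as in Tao's note. First I would couple the $(m\delta)$-random set $W$ with the union of $m$ independent $\delta$-random subsets $W_1,\ldots,W_m$ of $[n]$; by a standard stochastic-domination argument (possibly after a small adjustment of $\delta$) it suffices to bound the failure probability for $W_1\cup\cdots\cup W_m$. The target is then to show that the probability that no $F\in\ff$ lies in this union is at most $k\bigl(5/\log_2(r\delta)\bigr)^m$.

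The heart of the argument is a one-step lemma: for any $r$-spread $\g\subseteq 2^{[n]}$ with sets of size at most $k$ and any $\delta$-random subset $W'\subseteq [n]$, with probability at least $1-5/\log_2(r\delta)$ there exists a set $S\subseteq W'$ contained in some member of $\g$ such that the link $\g(S)=\{F\setminus S:F\in\g,\ S\subseteq F\}$ is again $r$-spread and every set in $\g(S)$ has size at most $k/2$. Given this lemma, the full theorem follows by iteration: after incorporating $W_i$, the problem reduces to finding a set of the current link family, whose maximum set size has been halved. After at most $\lceil\log_2 k\rceil$ successful halvings we are looking for the empty set, which is trivially contained in $W_1\cup\cdots\cup W_m$. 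A union bound over the $\le\lceil\log_2 k\rceil$ halving rounds together with the independent per-round failure probability $5/\log_2(r\delta)$ produces the factor $k\cdot(5/\log_2(r\delta))^m$.

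The one-step lemma I would establish by a compression argument. On the bad event, no short $S\subseteq W'$ works, meaning every candidate $F\in\g$ with $S\subseteq F$ leaves more than $k/2$ elements outside $S$. By $r$-spreadness of $\g$, a fixed set $S$ of size $s$ is contained in at most $r^{-s}|\g|$ members of $\g$, so $S$ can be described by roughly $s\log_2 r$ bits given $\g$. Conversely, the portion of $W'$ relevant to the failure can be recovered from $S$ plus a short string of ``advice bits'' that chronicles the greedy halving attempts. Comparing these description lengths against the entropy $\sim |W'|\log_2(1/\delta)$ of $W'$ yields the claimed probability bound; optimizing the encoding (in particular, describing $S$ as a subset of $W'$ rather than of $[n]$) is what gives Tao's constant~$5$.

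The main obstacle will be carrying out the one-step lemma with the sharp constant while simultaneously preserving $r$-spreadness of the link, so the induction proceeds without any loss in the spread parameter between rounds. The standard fix is to choose $S$ to be the \emph{minimal} element (in a canonical order) among candidate witnesses inside $F\cap W'$: minimality forces the link $\g(S)$ to inherit the spread property and also makes $S$ canonical given the witness structure, so no additional bits are needed to pin $S$ down beyond the halving advice. Once this single-round step is in place, the outer union bound producing the factor of $k$ is essentially bookkeeping.
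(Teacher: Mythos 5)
This statement is quoted in the paper as a black box from Alweiss--Lovett--Wu--Zhang and Tao's sharpening; the paper gives no proof of it, so the only question is whether your reconstruction of the external argument is sound. It is not, for two concrete reasons. First, your scheme for assembling the final bound does not produce the claimed quantity. In the ``halve the maximum set size once per round'' structure, the whole process fails exactly when fewer than $\lceil\log_2 k\rceil$ of the $m$ rounds succeed; with per-round failure probability $p=5/\log_2(r\delta)$ this gives a binomial tail of order $\binom{m}{\lceil\log_2 k\rceil}p^{\,m-\lceil\log_2 k\rceil}$, which is much larger than $p^m k$ when $p$ is small, and a plain union bound over rounds gives $mp$, which has no $m$-th power at all. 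The case $m=1$ already shows the halving architecture cannot prove the stated inequality: the theorem then asserts failure probability at most $5k/\log_2(r\delta)$, yet a single $\delta$-random set cannot complete $\log_2 k$ halvings. The exponent $m$ and the factor $k$ in Tao's bound come from a different mechanism: one shows that the \emph{expected} size of the uncovered remainder $F\setminus(W_1\cup\dots\cup W_i)$ (for a suitably chosen random $F$, measured via Shannon entropy) contracts by a factor $5/\log_2(r\delta)$ at each round, so after $m$ rounds it is at most $(5/\log_2(r\delta))^m k$, and Markov's inequality converts this expectation into the probability that the remainder is nonempty. Your write-up never sets up this expectation iteration.

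Second, your one-step lemma is not correct as stated: for an $r$-spread family $\g$ one cannot in general find a single set $S\subseteq W'$ whose link $\g(S)$ is again $r$-spread with all sets of size at most $k/2$ --- links of spread families need not be spread, and choosing $S$ ``minimal in a canonical order'' does not repair this. This is precisely the delicate point in all versions of the argument: ALWZ and Rao pass to the \emph{leftover} family $\{F\setminus W' : F\in\g'\}$ over a carefully chosen subfamily $\g'$ (accepting some loss), while Tao avoids the issue entirely by working with a random set $F$ of near-maximal entropy and never re-asserting spreadness of a link. As written, your induction hypothesis cannot be restored after one round, so the iteration does not close. To fix the proposal you would need to either reproduce the leftover-family bookkeeping of ALWZ/Rao (which yields a weaker, $\log k$-loss bound rather than the sharp form quoted here) or switch to the entropy formulation, in which the quantity that contracts is $\mathbb{E}|F\setminus W|$ rather than a worst-case set size.
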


We will need the following slight variant of a statement from the paper of Kupavskii and Zakharov \cite{KuZa}. We give a proof here for completeness.

\begin{thm}\label{thmkz}
 Let $n,k,t\ge1$ be some integers and $\aaa\subset 2^{[n]}$ be an ``ambient'' family. Consider a family $\ff\subset \aaa\cap {[n]\choose \le k}$ that is  $t$-intersecting. Assume that $\aaa$  is $(r_0,t)$-spread. Fix some $q$, and assume that the parameters satisfy the following inequalities: $r_0>r\ge 2q \ge 2t$ and $r> 2^{12}\log_2(2k)$. % and a$2\tau q$ and $r_0> 2^{12}\tau \log_2(2k)$.

Then there exists  a $t$-intersecting family $\s$ of sets of size at most $q$ and a `remainder' $\ff'\subset \ff$ such that
\begin{itemize}
    \item[(i)] We have $\ff\setminus \ff'\subset \aaa[\s]$;
    \item[(ii)] for any $B\in \s$ there is a family $\ff_B\subset \ff$ such that $\ff_B(B)$ is $r$-spread;
    \item[(iii)] For some $T$ of size $t$ we have $|\ff'|\le r^{q+1}r_0^{t-q-1}|\aaa(T)|$.
  \end{itemize}
\end{thm}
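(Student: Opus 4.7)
I would follow the spread approximation framework and build $\s$ greedily via the standard ``extension'' lemma. Initialize $\ff^{(0)} := \ff$ and $\s := \emptyset$. At step $i$, let $X_i$ maximize $|\ff^{(i)}[X]| r^{|X|}$ over all $X \subset [n]$; any violation of the $r$-spreadness of $\ff^{(i)}(X_i)$ at some $Y$ disjoint from $X_i$ would give $|\ff^{(i)}[X_i \cup Y]| r^{|X_i \cup Y|} > |\ff^{(i)}[X_i]| r^{|X_i|}$, contradicting maximality, so $\ff^{(i)}(X_i)$ is automatically $r$-spread. If $|X_i| \in [t,q]$, add $X_i$ to $\s$, set $\ff_{X_i} := \ff^{(i)}[X_i]$, update $\ff^{(i+1)} := \ff^{(i)} \setminus \ff^{(i)}[X_i]$ and iterate; otherwise stop with $\ff' := \ff^{(i)}$. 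Property (i) is built in, and (ii) holds because $\ff_{X_i}(X_i) = \ff^{(i)}(X_i)$ is $r$-spread.

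\textit{Bounding $|\ff'|$.} If the process stops with $|X_*| \ge q+1$, then plugging $X = \emptyset$ into the defining maximum gives $|\ff'| \le |\ff'[X_*]| r^{|X_*|} \le |\aaa(X_*)| r^{|X_*|}$. Choosing any $T \subset X_*$ with $|T| = t$ and applying the $(r_0,t)$-spreadness of $\aaa$ to $\aaa(T)$ yields $|\aaa(X_*)| \le r_0^{-(|X_*|-t)} |\aaa(T)|$, hence $|\ff'| \le r^{|X_*|} r_0^{t-|X_*|} |\aaa(T)|$; since $r < r_0$, the right-hand side is maximized at $|X_*| = q+1$, giving (iii). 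The complementary case $|X_*| < t$ forces $\ff' = \emptyset$: the family $\ff^{(i)}[X_*] \subset \ff$ is $t$-intersecting, so $\ff^{(i)}(X_*)$ is $(t-|X_*|)$-intersecting and $r$-spread; the ``two disjoint members'' argument of the next paragraph, specialized to $B = B' = X_*$, then delivers two disjoint sets in $\ff^{(i)}(X_*)$, contradicting $1$-intersection. By maximality of $X_*$ the whole $\ff^{(i)}$ is then empty.

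\textit{The main obstacle: $\s$ is $t$-intersecting.} Take $B, B' \in \s$ and set $D := B \cap B'$, $E := B \setminus B'$, $E' := B' \setminus B$. Assuming for contradiction $|D| < t$, the plan is to exhibit $F \in \ff_B, F' \in \ff_{B'}$ with $F \cap F' = D$, contradicting the $t$-intersection of $\ff$. Using $r \ge 2q \ge 2|E'|$ and the $r$-spreadness of $\ff_B(B)$, the subfamily $\hat\ff_B := \{\tilde F \in \ff_B(B) : \tilde F \cap E' = \emptyset\}$ satisfies $|\hat\ff_B| \ge |\ff_B(B)|/2$ by the union bound over $E'$, and a short calculation shows $\hat\ff_B$ is $(r/2)$-spread as a family of subsets of $[n] \setminus (B \cup B')$; define $\hat\ff_{B'}$ analogously. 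Now sample a $(2p)$-random subset $W \subset [n] \setminus (B \cup B')$ and split it uniformly into disjoint halves $W_1, W_2$, each of which is a $p$-random subset. Theorem \ref{thmtao} (whose hypothesis $r > 2^{12}\log_2(2k)$ is calibrated precisely so that one can choose $m, \delta$ with $m\delta = p$ and $(5/\log_2((r/2)\delta))^m k < 1/4$) guarantees that each of the events ``$W_1 \supset \tilde F$ for some $\tilde F \in \hat\ff_B$'' and ``$W_2 \supset \tilde F'$ for some $\tilde F' \in \hat\ff_{B'}$'' has probability $> 3/4$, so by the union bound on the complements both occur together with positive probability. Tracing definitions, $\tilde F \cap \tilde F' = \emptyset$ (as $W_1 \cap W_2 = \emptyset$), $\tilde F \cap B' = \tilde F \cap E' = \emptyset$, and $\tilde F' \cap B = \tilde F' \cap E = \emptyset$, so $F := B \cup \tilde F \in \ff_B$ and $F' := B' \cup \tilde F' \in \ff_{B'}$ satisfy $F \cap F' = D$. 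The technical crux is formalizing the spreadness transfer to $\hat\ff_B, \hat\ff_{B'}$ under restriction and tuning the Alweiss--Tao parameters, and this is the main obstacle of the argument.
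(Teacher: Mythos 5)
Your proof is correct and follows essentially the same spread-approximation route as the paper: the same greedy peeling procedure (your criterion of maximizing $|\ff^{(i)}[X]|\,r^{|X|}$ is equivalent to the paper's choice of an inclusion-maximal $S_i$ with $|\ff^i(S_i)|\ge r^{-|S_i|}|\ff^i|$, and both yield $r$-spreadness of the link automatically), the same bound for the remainder via $(r_0,t)$-spreadness, and the same Alweiss--Tao probabilistic argument establishing that $\s$ is $t$-intersecting. The only cosmetic differences are that the paper also admits sets of size $<t$ into $\s$ and rules them out by running the probabilistic argument with $S_1=S_2$, whereas you exclude them upfront and show $\ff'=\emptyset$ in that case; and your target failure probability $<1/4$ is stronger than needed (the paper's calibration with $m=\log_2(2k)$, $\delta=(2\log_2(2k))^{-1}$ only gives $<1/2$ per side, which already suffices for the union bound).
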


\begin{proof}
Consider the following procedure for $i=1,2,\ldots $ with $\ff^1:=\ff$.
\begin{enumerate}
    \item Find an inclusion-maximal set $S_i$ such that  $|\ff^i(S_i)|\ge  r^{-|S_i|} |\ff^i|$;
    \item If $|S_i|> q$ or $\ff^i = \emptyset$ then stop. Otherwise, put $\ff^{i+1}:=\ff^i\setminus \ff^i[S_i]$.
\end{enumerate}

The family $\ff^i(S_i)$ is $r$-spread. Indeed, arguing indirectly, assume that  there is a set $S_i'$ disjoint from $S_i$ violating this. We have $|\ff^i(S_i\sqcup S_i')|\ge r^{-|S_i'|}|\ff^i(S_i)|\ge r^{-|S_i'|-|S_i|}|\ff^i|$, a contradiction with the maximality of $S_i.$

Let $N$ be the step of the procedure for $\ff$ at which we stop. Put  $\s:=\{S_1,\ldots, S_{N-1}\}$. Clearly, $|S_i|\le q$ for each $i\in [N-1]$. The family $\ff_{B}$ promised in (ii) is defined to be $\ff^i[S_i]$  for $B=S_i$. Next, note that if $\ff^N$ is non-empty, then for any subset $T\subset S_N$ of size $t$ we have $$|\ff^N|\le r^{|S_N|} |\ff^{N}(S_N)|\le  r^{|S_N|} |\aaa(S_N)|\le r^{|S_N|}r_0^{-|S_N|+t}|\aaa(T)|\le  r^{q+1}r_0^{-q-1+t}|\aaa(T)|,$$
where in the second to last inequality we used the fact that $\aaa$ is $(r_0,t)$-spread, and in the last inequality we used $|S_N|\ge q+1$ and $r_0>r$.
We put $\ff':=\ff^m$. Since either $|S_m|>q$ or $\ff' = \emptyset $, and, moreover, $r_0\ge r,$ we have $|\ff'|\le r^{q+1}r_0^{t-q-1}|\aaa(T)|$.

The last and crucial thing to verify is the $t$-intersection property of $\s$.
Take any (not necessarily distinct) $S_1,S_2\in \mathcal S$  and assume that $|S_1\cap S_2|<t$. Recall that for both $\ell\in \{1,2\}$ the family  $\g_\ell:=\ff_{S_\ell}(S_\ell)$ is $r$-spread. We use this in the second inequality below.
  $$|\g_1(\bar S_2)| \ge |\g_1|-\sum_{x\in S_2\setminus S_1} |\g_1(\{x\})|\ge \Big(1-\frac {|S_2|}{r}\Big) |\g_1|\ge \frac 12|\g_1|.$$
  In the last inequality we used that $r\ge 2q.$ The same is valid for $\g_2(\bar S_1)$. Note that both $\g_1':=\g_1(\bar S_2)$ and $\g_2':=\g_2(\bar S_1)$ are subfamilies of $2^{[n]\setminus (S_1\cup S_2)}.$ Moreover,  $\g_\ell'$ is $\frac r2$-spread for both $\ell \in \{1,2\}$.  Indeed, this holds because for any non-empty $Y$ we have $|\g_\ell'(Y)|\le |\g_\ell(Y)|\le r^{-|Y|}|\g_\ell|\le 2r^{-|Y|}|\g'_\ell|\le (r/2)^{-|Y|}|\g'_\ell|$.% of the le displayed inequality and the   trivial inclusion $\g_\ell'(Y)\subset \g_\ell(Y)$, valid for any $Y$.

  Next, we apply Theorem~\ref{thmtao}. Let us put $m= \log_2(2k)$ and $\delta = (2\log_2(2k))^{-1}$. Note that $m\delta = \frac 12$ and $\frac r2\delta > 2^{10}$ by our choice of $r$.  Theorem~\ref{thmtao} implies that a $\frac{1}{2}$-random subset $W$ of $[n]\setminus (S_i\cup S_j)$ contains a set from $\g_j'$ with probability strictly bigger than
  $$1-\Big(\frac 5{\log_2 2^{10}}\Big)^{\log_2 2k} k = 1-2^{-\log_2 2k} k = \frac 12.$$

  Consider a random partition of $[n]\setminus (S_1\cup S_2)$ into $2$ parts $U_1,U_2$, including each element with probability $1/2$ in each of the parts. Then both $U_\ell$, $\ell\in \{1,2\}$, are distributed as $W$ above. Thus, the probability that there is $F_\ell \in \g_\ell'$, such that $F_\ell\subset U_\ell$, is strictly bigger than $\frac 12$. Using the union bound, we conclude that, with positive probability, it holds that there are such $F_\ell$, $F_\ell\subset U_\ell,$ for each  $\ell \in\{1,2\}$. Fix such a choice of $U_\ell$ and $F_\ell$, $\ell \in \{1,2\}$. But then, on the one hand, both $F_1\cup S_1$ and $F_2\cup S_2$ belong to $\ff$ and, on the other hand, $|(F_1\cup S_1)\cap (F_2\cup S_2)| = |S_1\cap S_2|<t$, a contradiction with $\ff$ being $t$-intersecting.
\end{proof}

\section{Proof of Theorem~\ref{thm1}}
We start by showing that Theorem~\ref{thmkz} is relevant for the case of simplicial complexes.

\begin{lem}\label{lemspread} Let $\mathcal C\subset 2^{[N]}$ be a simplicial complex of rank $\ge n$. Then $\mathcal{C}^{(k)}$ is $\big(\frac{n}{k}, k\big)$-spread.
\end{lem}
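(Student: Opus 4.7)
The plan is to unpack the $(n/k,k)$-spread condition and reduce it to a single-element ratio bound via links. By definition, we need to show that for every $S\subset[N]$ with $|S|\le k$ and every $X$ disjoint from $S$,
$$|\mathcal C^{(k)}[S\cup X]|\le (k/n)^{|X|}\,|\mathcal C^{(k)}[S]|.$$
Enumerating $X=\{y_1,\ldots,y_s\}$ and telescoping over $B_i=S\cup\{y_1,\ldots,y_i\}$, it suffices to prove the one-step ratio $|\mathcal C^{(k)}[B\cup\{y\}]|/|\mathcal C^{(k)}[B]|\le k/n$ whenever $y\notin B$ and $|B|<k$.

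To handle the one-step ratio, I would pass to the link of $B$ in $\mathcal C$, namely $\mathcal D:=\{F\setminus B:B\subseteq F\in\mathcal C\}$. This is again a simplicial complex, and its maximal sets are precisely the sets $M\setminus B$ for maximal $M\in\mathcal C$ containing $B$, so $\mathcal D$ has rank $\ge n-|B|$. Since $\mathcal C^{(k)}[B]$ is in bijection with $\mathcal D^{(k-|B|)}$ and $\mathcal C^{(k)}[B\cup\{y\}]$ with $\mathcal D^{(k-|B|)}[y]$, the problem reduces to the following standalone statement: for any simplicial complex $\mathcal D$ of rank $\ge m$ and any vertex $y$,
$$|\mathcal D^{(j)}[y]|\le (j/m)\,|\mathcal D^{(j)}|.$$
Plugging back gives a one-step ratio of $(k-|B|)/(n-|B|)\le k/n$ (using $n\ge k$), which is exactly what is needed.

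I would prove the standalone statement by a swap-based double counting. For each $F\in\mathcal D^{(j)}[y]$, fix a maximal set $M_F\in\mathcal D$ containing $F$, so $|M_F|\ge m$, and for every $z\in M_F\setminus F$ consider $F':=(F\setminus\{y\})\cup\{z\}$. Since $F'\subseteq M_F$, we have $F'\in\mathcal D^{(j)}$, and $y\notin F'$ because $z\ne y$. Counting pairs $(F,z)$ two ways: the $F$-side contributes at least $(m-j)\,|\mathcal D^{(j)}[y]|$, while each $F'\in\mathcal D^{(j)}$ with $y\notin F'$ is produced at most $|F'|=j$ times (the choice $z\in F'$ determines $F=(F'\setminus\{z\})\cup\{y\}$). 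This yields
$$(m-j)\,|\mathcal D^{(j)}[y]|\le j\bigl(|\mathcal D^{(j)}|-|\mathcal D^{(j)}[y]|\bigr),$$
which rearranges to the desired inequality. The one nontrivial ingredient is isolating this swap; once available, the link reduction and the telescoping are routine bookkeeping, and the uniform bound $k/n$ survives the iteration thanks to $n\ge k$.
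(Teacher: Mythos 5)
Your proposal is correct and follows the same strategy as the paper: reduce to the single-vertex bound $|\mathcal D^{(j)}[y]|\le (j/m)|\mathcal D^{(j)}|$ for a simplicial complex $\mathcal D$ of rank $\ge m$ via links, and then telescope using $(k-i)/(n-i)\le k/n$. The only cosmetic difference is in the double count for that single-vertex bound: the paper phrases it as a local-LYM bipartite graph between the $k$-th and $(k-1)$-th levels, while you perform an element swap directly within the $j$-th level; after rearranging, both yield exactly the inequality $m|\mathcal D^{(j)}[y]|\le j|\mathcal D^{(j)}|$.
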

\begin{proof} Let us first show that for any $x\in[N]$ we have that $|\cc^{(k)}(x)|\le \frac kn \mathcal{C}^{(k)}$.  In order to do so, we use the local LYM inequality. Consider a bipartite graph $G$ with parts $\mathcal{C}^{(k)}$ and  $\mathcal{C}^{(k)}(x)$. We connect two vertices by an edge if one of the corresponding sets contain the other.

Take an arbitrary vertex of $G$ from the first part. It has degree $1$ in $G$  if the corresponding set $F$ contains $x$ (there is only one set from $\mathcal{C}^{(k)}(x)$ contained in $F$, that is, $F\setminus \{x\}$). Note that there are exactly $|\cc^{(k)}(x)|$ such vertices. Otherwise, it has degree at most $k$ since it has $k$ subsets of size $k-1$. Take an arbitrary vertex $F$ from the second part. There is a maximal set $W$ of size at least $n$ such that $F\subset W$, and thus all sets of the form $F\cup \{w\}$, $w\in W\setminus F$, belong to $\cc^{(k)}$. Thus,  it has degree at least $n-k+1$ in $G$. Double counting the number of edges in the graph, we get
$$
|\mathcal{C}^{(k)}(x)|+k(|\mathcal{C}^{(k)}|-|\mathcal{C}^{(k)}(x)|) \geq (n-k+1)|\mathcal{C}^{(k)}(x)|,
$$
which is equivalent to
$$
\dfrac{|\mathcal{C}^{(k)}(x)|}{|\mathcal{C}^{(k)}|} \leq \dfrac{k}{n}.
$$
Note that for any $Y\subset [N]$ of size $\le k$ the family $\cc(Y)$ is a simplicial complex of rank $\ge n-|Y|$, and the family $\mathcal C^{(k)}(Y)$ is a family of uniformity $k-|Y|$. By what we have proved above, for any $x\in [N]\setminus Y$ we have $$|\mathcal C^{(k)}(Y\cup \{x\})|\le \frac{k-|Y|}{n-|Y|} |\mathcal C^{(k)}(Y)|\le \frac{k}{n} |\mathcal C^{(k)}(Y)|.$$
Applying this inequality iteratively, it is straightforward to see that for any disjoint $X, Y\subset [N]$, where $|Y|\le k,$  we have
$$|\mathcal C^{(k)}(X\cup Y)|\le \Big(\frac{n}{k}\Big)^{|X|} |\mathcal C^{(k)}(Y)|.$$
This shows that the family $\mathcal C^{(k)}$ is $(\frac nk,k)$-spread.
\end{proof}

Let us also recall the following statement from the paper \cite{KuZa}.

Recall that a $t$-intersecting family $\s$ is {\it non-trivial } if $|\cap_{F\in \s} F|<t.$

\begin{thm}\label{thmapproxtrivial}
Let $\varepsilon\in (0,1]$, $n,r_0,q, t \ge 1$  be such that $\varepsilon r_0\ge 2^{17} q \log_2 q$.
Let $\aaa \subset 2^{[n]}$ be an $(r_0, t)$-spread family and let $\s \subset {[n] \choose \le q}$ be a non-trivial $t$-intersecting family.
Then there exists a $t$-element set $T$ such that $|\aaa[\s]| \le \varepsilon |\aaa[T]|$.
\end{thm}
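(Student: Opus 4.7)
The plan is to exhibit a $t$-set $T$ with $|\aaa[\s]|\le\varepsilon|\aaa[T]|$ via a peeling argument: non-triviality refines each $t$-star $\s[T]$ into a union of $(t+1)$-stars, each paying a factor $r_0^{-1}$ from the $(r_0,t)$-spreadness of $\aaa$. The peeling step goes as follows. Fix $T\in\partial_t\s$; non-triviality provides some $S'_T\in\s$ with $|S'_T\cap T|<t$ (otherwise $T\subset\cap_{S\in\s}S$, contradicting non-triviality). For any $S\in\s[T]$, $t$-intersection forces $|S\cap S'_T|\ge t$, which combined with $|S'_T\cap T|<t$ gives $(S\setminus T)\cap S'_T\ne\emptyset$, so $S\supset T\cup\{x\}$ for some $x\in S'_T\setminus T$. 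Hence
\[
\s[T]\subset\bigcup_{x\in S'_T\setminus T}\s[T\cup\{x\}],\qquad |\aaa[\s[T]]|\le\frac{q}{r_0}\,|\aaa[T]|,
\]
where $|\aaa[T\cup\{x\}]|\le r_0^{-1}|\aaa[T]|$ comes from the $r_0$-spreadness of $\aaa(T)$ (applicable since $|T|=t$).

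To cover all of $\s$, fix $S_0\in\s$; the $t$-intersection property gives $\s\subset\bigcup_{T\in\binom{S_0}{t}}\s[T]$, whence a naive union bound yields $|\aaa[\s]|\le\binom{q}{t}(q/r_0)\max_T|\aaa[T]|$. This proves the theorem whenever $\binom{q}{t}q\le\varepsilon r_0$, but is too weak to match the claimed $\varepsilon r_0\ge 2^{17}q\log_2 q$ once $t$ is moderately large. The sharper bound should be obtained by replacing the enumeration of $\binom{S_0}{t}$ with a random-sampling step in the spirit of the two-coloring argument at the end of the proof of Theorem~\ref{thmkz}: sample $T$ from an appropriate distribution on $\binom{S_0}{t}$ and apply Theorem~\ref{thmtao} to the $r_0$-spread families $\aaa(T)$ with parameters $m=\log_2 q$ and $\delta=(\log_2 q)^{-1}$, concluding that with constant probability a single random $T$ simultaneously witnesses the peeling bound for essentially every $S\in\s$.

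The main obstacle is precisely this random-sampling step: verifying that a single random $t$-subset of $S_0$ captures the contribution of essentially every star $\s[T]$ with constant probability, so that the factor $\binom{q}{t}$ collapses to $\log_2 q$. This parallels the random-partition argument at the end of the proof of Theorem~\ref{thmkz}, where Theorem~\ref{thmtao} was used to force a combinatorial contradiction from assumed $t$-intersection failure; here the same machinery, applied to the $r_0$-spread families $\aaa(T)$ for $|T|\le t$, needs to be used constructively to extract the required quantitative cover, with the $\log_2 q$ factor entering exactly as in the $(m,\delta)$-choice inside Theorem~\ref{thmtao}.
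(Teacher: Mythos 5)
Your peeling step is correct and is a reasonable opening move: since $\s$ is non-trivial, for every $t$-set $T$ there is $S'_T\in\s$ with $|T\cap S'_T|<t$, and $t$-intersection then forces every $S\in\s[T]$ to meet $S'_T$ outside $T$, giving $\s[T]\subset\bigcup_{x\in S'_T\setminus T}\s[T\cup\{x\}]$ and, by $(r_0,t)$-spreadness, $|\aaa[\s[T]]|\le q r_0^{-1}\max_{|T'|=t}|\aaa[T']|$. You also correctly observe that covering $\s$ by $\bigcup_{T\in\binom{S_0}{t}}\s[T]$ and summing gives only $\binom{q}{t}q/r_0$, which is far weaker than the claimed $2^{17}q\log_2 q/r_0$ already at $t=1$ once $q$ is large.

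The genuine gap is the step you flag yourself: the proposal to collapse $\binom{q}{t}$ to $\log_2 q$ by choosing a random $T\in\binom{S_0}{t}$ and invoking Theorem~\ref{thmtao} is not carried out and, as sketched, does not have a workable mechanism. Theorem~\ref{thmtao} controls the probability that a $p$-random subset $W$ of the \emph{ground set} contains a member of a fixed $r$-spread family; it is a covering statement about $W$, not a statement that lets one random $t$-subset of a fixed $q$-element set $S_0$ ``simultaneously witness'' the peeling bound for the stars $\s[T']$ with $T'\ne T$. More fundamentally, $|\aaa[\s]|$ is a fixed quantity dominated by $\sum_{T\in\binom{S_0}{t}}|\aaa[\s[T]]|$; randomizing the choice of $T$ does not shrink that sum, and there is no event of the kind Theorem~\ref{thmtao} analyzes whose occurrence would let a single sampled $T$ absorb the contribution of the other $\binom{q}{t}-1$ stars. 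So the part of the argument that actually produces the $q\log_2 q$ bound is missing, and the route you propose to fill it would need a completely different formulation before one could even attempt to verify it. Note also that the present paper only cites Theorem~\ref{thmapproxtrivial} from \cite{KuZa} and does not reprove it; the correct argument there uses a more careful decomposition than a one-shot peel followed by a union bound over $\binom{S_0}{t}$, and you would need to consult that source (or find a genuinely different argument) to close this gap.
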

We note that this theorem alone implies the following theorem.
\begin{thm}\label{thmnew1}
    Let $n,k,t>0$ be integers, $k\ge t$. Let $\cc$ be a simplicial complex of rank $n$, and assume that $n\ge 2^{18}k^2\log_2k.$  Take the largest $t$-intersecting family $\ff\subset \cc^{(k)}$. Then there exists a set $T$ of size $t$ such that $\ff= \cc^{(k)}(T)$. Moreover, if $\ff$ is non-trivial then $|\ff|\le \frac 12|\cc^{(k)}(T)|.$
\end{thm}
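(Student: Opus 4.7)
The plan is to combine the spread approximation of Theorem~\ref{thmkz} with the $(n/k,k)$-spreadness of $\cc^{(k)}$ from Lemma~\ref{lemspread}, establish that every non-trivial $t$-intersecting $\ff\subset\cc^{(k)}$ has $|\ff|\le \tfrac12\max_T|\cc^{(k)}[T]|$, and then deduce the first assertion: a star $\cc^{(k)}[T^*]$ is itself a (trivial) $t$-intersecting family, so any maximum-size $t$-intersecting $\ff$ has $|\ff|\ge |\cc^{(k)}[T^*]|>\tfrac12|\cc^{(k)}[T^*]|$ and therefore cannot be non-trivial; being trivial and maximum-size then forces $\ff=\cc^{(k)}[T']$ for some $t$-element $T'$.

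To prove the non-trivial bound, set $\aaa:=\cc^{(k)}$ and $r_0:=n/k$ and apply Theorem~\ref{thmkz} to $\ff$ with parameters $q\ge t$ and $r$ chosen so that $r_0>r\ge 2q$, $r>2^{12}\log_2(2k)$, and the remainder estimate $r^{q+1}r_0^{t-q-1}\le \tfrac14$ all hold. This yields a $t$-intersecting $\s\subset\binom{[N]}{\le q}$ with $\ff\setminus\ff'\subset\aaa[\s]$ and $|\ff'|\le \tfrac14\max_T|\cc^{(k)}[T]|$. Next I would case-split on $\s$. If $\s$ is non-trivial, Theorem~\ref{thmapproxtrivial} with $\varepsilon=\tfrac14$ (whose hypothesis $r_0\ge 2^{19}q\log_2 q$ is secured by the assumption on $n$) produces a $t$-set $T$ with $|\aaa[\s]|\le \tfrac14|\cc^{(k)}[T]|$, and adding the remainder gives $|\ff|\le \tfrac12\max_T|\cc^{(k)}[T]|$. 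If $\s$ is trivial with common $t$-set $T_0$, then $\ff\setminus\ff'\subset\cc^{(k)}[T_0]$, and because $\ff$ is non-trivial some $F_0\in\ff'$ omits an element of $T_0$; put $s:=|F_0\cap T_0|<t$. Every $F\in\ff\cap\cc^{(k)}[T_0]$ must $t$-intersect $F_0$, so $F\setminus T_0$ contains some $(t-s)$-subset of $F_0\setminus T_0$, and a union bound combined with the spread inequality $|\cc^{(k)}[T_0\cup A]|\le (k/n)^{|A|}|\cc^{(k)}[T_0]|$ from Lemma~\ref{lemspread} yields
\[
|\ff\cap\cc^{(k)}[T_0]|\le \binom{k-s}{t-s}\Bigl(\tfrac{k}{n}\Bigr)^{t-s}|\cc^{(k)}[T_0]|\le \Bigl(\tfrac{k^2}{n}\Bigr)^{t-s}|\cc^{(k)}[T_0]|\le \tfrac14|\cc^{(k)}[T_0]|
\]
under $n\ge 4k^2$; adding $|\ff'|$ again produces the $\tfrac12$ bound.

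The main obstacle is the joint calibration of $q$ and $r$ inside Theorem~\ref{thmkz}: $q$ must be large enough, roughly $q\gtrsim t\log r_0/\log(r_0/r)$, to drive the remainder $r^{q+1}r_0^{t-q-1}$ below $\tfrac14$, yet small enough that $r_0\ge 2^{19}q\log_2 q$ continues to hold so that Theorem~\ref{thmapproxtrivial} applies with $\varepsilon=\tfrac14$. Verifying that a valid window exists under the hypothesis $n\ge 2^{18}k^2\log_2 k$ (for instance by taking $r$ just above $2^{12}\log_2(2k)$ and $q$ of order $t\log(n/k)$, which is at most of order $k\log k$ since $t\le k$) is the only genuine calculation; everything else is a clean cascade of the two structural theorems, with the trivial-$\s$ case rescued by the extra $t$-intersection constraint imposed by the exceptional set $F_0\in\ff'$.
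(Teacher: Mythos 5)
Your approach is genuinely different from the paper's, and it is also more complicated than necessary. The paper proves Theorem~\ref{thmnew1} in one line: since every set in $\ff$ has size exactly $k$, one can take $\s=\ff$ directly in Theorem~\ref{thmapproxtrivial} with $q=k$ and $\varepsilon=\tfrac12$. The hypothesis $\varepsilon r_0\ge 2^{17}q\log_2 q$ with $r_0=n/k$ and $q=k$ becomes exactly $n\ge 2^{18}k^2\log_2 k$, and the conclusion $|\cc^{(k)}[\ff]|\le\tfrac12|\cc^{(k)}[T]|$ immediately gives $|\ff|\le\tfrac12|\cc^{(k)}[T]|$ when $\ff$ is non-trivial. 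No spread approximation via Theorem~\ref{thmkz} is needed; that machinery is reserved for Theorem~\ref{thm1}, where $n$ is of order $k\log k$ and hence the sets of $\ff$ are too large to feed into Theorem~\ref{thmapproxtrivial} directly.

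Beyond being a detour, your calibration does not close under the stated hypothesis. Using $\varepsilon=\tfrac14$ turns the requirement into $r_0\ge 2^{19}q\log_2 q$. Since Theorem~\ref{thmkz} forces $q\ge t$, and $t$ may be as large as $k$, you would need $r_0\ge 2^{19}k\log_2 k$, i.e.\ $n\ge 2^{19}k^2\log_2 k$ --- a factor $2$ more than the theorem assumes. Your heuristic ``$q$ of order $t\log(n/k)$'' only helps when $t$ is small; for $t$ close to $k$ the constraint $q\ge t$ dominates and there is no window. The individual pieces of your argument (the reduction via Theorem~\ref{thmkz}, the union-bound estimate in the trivial-$\s$ case using the $(n/k,k)$-spreadness) are structurally sound, and your deduction of $\ff=\cc^{(k)}[T]$ from the $\tfrac12$-bound via a star comparison matches the paper; but the route as a whole both misses the short proof and fails to verify the hypotheses with the constants given.
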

\begin{proof}
    We directly apply Theorem~\ref{thmapproxtrivial}. Put $\varepsilon=\frac 12$ and take $\s  = \ff.$ The family $\cc^{(k)}$ is $(\frac nk,k)$-spread, and so the conditions of Theorem~\ref{thmapproxtrivial} are fulfilled with $q = k$. If $\ff$ is non-trivial then we conclude that it is at most $\frac 12|\aaa(T)|$ for some $t$-element $T.$ Thus, $\ff$ is trivial and so must coincide with $\aaa(T)$ for some $t$-element $T$.
\end{proof}
We will need the following lemma that relates the sizes of complexes and its restrictions.
\begin{lem}\label{lem4}
Fix some integers $n,s,k,t>0$.    Let $\cc$ be a simplicial complex of rank $n$, $T$ be a subset of size $t$, and $F$ a set of size $s$. Then $|\cc^{(k)}(T,F\cup T)|\ge \Big(1-\frac{k-t}{n-s-t}\Big)^s|\cc^{(k)}(T)|$.
\end{lem}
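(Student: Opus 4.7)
The plan is to first reduce to a cleaner setting and then apply Lemma~\ref{lemspread} iteratively, one element of $F$ at a time. For the reduction, note that for $A\supset T$ the condition $A\cap(T\cup F)=T$ depends only on $F\setminus T$, so replacing $F$ by $F\setminus T$ leaves $\cc^{(k)}(T,T\cup F)$ unchanged; since shrinking $s$ only strengthens the target inequality (a short check), I may assume $F\cap T=\emptyset$. Then I pass to the link complex $\mathcal{D}:=\cc(T)$, a simplicial complex on $[N]\setminus T$ whose rank is $\ge n':=n-t$, since every maximal set of $\cc$ containing $T$ has size $\ge n$. Writing $k':=k-t$, the bijection $A\mapsto A\setminus T$ identifies $\cc^{(k)}(T,T\cup F)$ with $\{B\in\mathcal{D}^{(k')}:B\cap F=\emptyset\}$ and $\cc^{(k)}(T)$ with $\mathcal{D}^{(k')}$, so the claim becomes
$$|\{B\in\mathcal{D}^{(k')}:B\cap F=\emptyset\}|\ge\Big(1-\tfrac{k'}{n'-s}\Big)^s|\mathcal{D}^{(k')}|.$$

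For the iteration, I enumerate $F=\{x_1,\dots,x_s\}$ and put $\mathcal{D}_i:=\{B\in\mathcal{D}:B\cap\{x_1,\dots,x_i\}=\emptyset\}$ for $0\le i\le s$, so that $\mathcal{D}_0=\mathcal{D}$ and $\mathcal{D}_s^{(k')}$ is the family on the left above. Each $\mathcal{D}_i$ is a simplicial complex, and I would verify that its rank is $\ge n'-i$: any maximal $M\in\mathcal{D}_i$ extends to a maximal $W\in\mathcal{D}$ of size $\ge n'$, and $W\setminus\{x_1,\dots,x_i\}$ is a member of $\mathcal{D}_i$ containing $M$ and of size $\ge n'-i$, forcing $M$ itself to have size $\ge n'-i$. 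Then Lemma~\ref{lemspread}, applied to $\mathcal{D}_{i-1}$ with rank $\ge n'-i+1$, yields $|\mathcal{D}_{i-1}^{(k')}(x_i)|\le\frac{k'}{n'-i+1}|\mathcal{D}_{i-1}^{(k')}|$; since $\mathcal{D}_i^{(k')}$ is exactly $\mathcal{D}_{i-1}^{(k')}$ minus the sets containing $x_i$, this rearranges to
$$\frac{|\mathcal{D}_i^{(k')}|}{|\mathcal{D}_{i-1}^{(k')}|}\ge 1-\frac{k'}{n'-i+1}.$$

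To conclude, I multiply these $s$ ratios and bound each factor below by $1-k'/(n'-s)$, using $n'-i+1>n'-s$ for $i\le s$. This gives $|\mathcal{D}_s^{(k')}|\ge(1-k'/(n'-s))^s|\mathcal{D}^{(k')}|$, which, reinterpreted via the identifications above, is exactly the statement of the lemma. The whole argument is largely routine; the step that needs the most care is the rank estimate for $\mathcal{D}_i$, since without it the denominators in the iterated LYM inequalities would drift and the final product would degrade.
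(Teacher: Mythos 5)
Your proof is correct and follows essentially the same approach as the paper: peel off the elements of $F$ one at a time, and at each step use the local LYM bound of Lemma~\ref{lemspread} on the appropriate link/restriction complex to control the ratio of sizes. The only cosmetic differences are that you explicitly reduce to $F\cap T=\emptyset$ (which the paper leaves implicit) and track the rank of the intermediate complexes as exactly $\ge n-t-i$ before relaxing to $n-t-s$ at the end, whereas the paper uses the uniform lower bound $n-s-t$ from the outset.
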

\begin{proof}
    Put $F = \{x_1,\ldots, x_s\}$, $T_i = \{x_1,\ldots, x_i\}$ for $i=0,\ldots, s$, and let us prove the following inequality for each $i\in [s]:$
    \begin{equation}\label{eq12}
        |\cc^{(k)}(T,F\cup T_i)|\ge \Big(1-\frac{k-t}{n-s-t}\Big)|\cc^{(k)}(T,F\cup T_{i-1})|.
    \end{equation}
In order to prove this, put $\aaa:= \cc^{(k)}(T,F\cup T_{i-1})$ and $\cc':= \cc(T,F\cup T_{i-1})$. Note that $\cc'$ is a simplicial complex with rank at least $n-s-t$, and that the sets in $\aaa$ have all size $k-t$. Lemma~\ref{lemspread} implies that $\aaa$ is $\frac{n-s-t}{k-t}$-spread. Then the left-hand side of \eqref{eq12} is  $|\aaa|-|\aaa(x_i)|,$ which is at least $(1-\frac{k-t}{n-s-t})|\aaa|$ by spreadness.

Combining inequalities \eqref{eq12} for all $i\in [s]$, we get the statement of the lemma.
\end{proof}

\iffalse\begin{thm}\label{thmnew2}
    Let $\cc$ be a simplicial complex of rank $n$, and assume that $n\ge 2^{13}k\log_2(2k),$ $n\ge 2^{19}tk\log_2^2\frac nk.$ Take the largest $t$-intersecting family $\ff\subset \cc^{(k)}$. Then there exists a set $T$ of size $t$ and a ``remainder'' family $\ff'$ such that $\ff\setminus \ff'\subset \cc^{(k)}(T)$ and $|\ff'|\le 2^{-2^{-18}\frac n{k\log_2 \frac nk}+t\log_2 \frac nk}|\cc^{(k)}(T)|$. In particular,
    $$|\ff|\le \Big(1+\Big(\frac kn\Big)^t\Big)|\cc^{(k)}(T)|.$$
\end{thm}
\fi

We are now ready to prove Theorem~\ref{thm1}.
\begin{proof}[Proof of Theorem~\ref{thm1}]
In view of Theorem~\ref{thmnew1}, we may assume that \begin{equation}\label{eq11}
    n<2^{18}k^2\log_2k.
\end{equation}

    Let us first fix the parameters to satisfy all the conditions in Theorems~\ref{thmapproxtrivial},~\ref{thmkz}. We put $\varepsilon=\frac 12$, $r_0 = \frac nk$ and take $q = 2^{-18}\frac n{k\log_2\frac nk}.$ We note that $q< k$ in view of \eqref{eq11}. Indeed, $q$ is monotone increasing as $n$ increases (for fixed $k$ and given lower bounds on $n$), and so it is sufficient to verify for $n = 2^{18}k^2\log_2 k$. But then $q = \frac {k\log_2 k}{\log_2(2^{18} k\log_2 k)}<k.$
   Note that $\cc^{(k)}$ is $(r_0, q)$-spread and that $q\log_2 q \le 2^{-18}\frac nk = 2^{-17}\varepsilon r_0.$ This verifies the conditions of Theorem~\ref{thmapproxtrivial}.

    In terms of Theorem~\ref{thmkz}, we put $r = r_0/2$ and note that $r_0/2 = r>2q$ by the above. We also have $q\ge t$ because $k>t$ and $n\ge 2^{19}tk\log_2^2\frac nk.$ Finally, we have $r = r_0/2 = \frac n{2k}>2^{12}\log_2(2k).$ Thus, all the conditions of Theorem~\ref{thmkz} are verified.

    Next, let us see what does these theorems imply in our situation. Theorem~\ref{thmkz} gives us a $t$-intersecting family $\s$ of sets of size at most $q$ and a remainder $\ff'$ of size
    \begin{multline}\label{eq13}
      |\ff'|\le r^{q+1}r_0^{t-q-1}|\cc^{(k)}(T)| = 2^{-q-1}(n/k)^t |\cc^{(k)}(T)| = \\
      2^{-2^{-18}\frac n{k\log_2 \frac nk}+t\log_2 \frac nk}|\cc^{(k)}(T)| \le 2^{-2^{-19}\frac n{k\log_2 \frac nk}}|\cc^{(k)}(T)|. %%= \Big(\frac kn\Big)^t|\cc^{(k)}(T)|.
      \end{multline}
In particular, it is easy to check that $|\ff'|<0.1 |\cc^{(k)}(T)|.$ Next, apply Theorem~\ref{thmapproxtrivial} to $\mathcal S$. We get that for some $t$-element set $T'$ we have $|\cc^{(k)}(\mathcal S)|\le 0.5|\cc^{(k)}(T')|$ unless $\s$ is trivial, i.e., $\s = \{T\}$ for some $t$-element $T$. If $\s$ is non-trivial, then $|\ff|\le |\cc^{(k)}(\s)|+|\ff'|\le 0.6 |\cc^{(k)}(T)|$ for some $t$-element $T$. In what follows, we suppose that $\s$ consists of a single set $T$.

This argument alone is sufficient to prove an approximate stability result, stating that $\ff$ that is close to maximal must be ``almost contained'' in a family $\cc^{(k)}[T]$. In what follows, we will make better use of the properties of the family $\ff'$ in order to prove the exact result and stability.

 First, we start by removing all sets from $\ff'$ that contain $T$. The remaining family, which we denote $\ff''$, satisfies $\ff''=\ff\setminus \cc^{(k)}[T]$. In what follows, we assume that $\ff''$ is non-empty (otherwise, we are done). We need to show that $|\ff''|<  |\cc^{(k)}[T]\setminus \ff|$.   Note that $|\ff''|\le |\ff'|$, and thus the bound \eqref{eq13} is valid for $|\ff''|$ as well.

  The next argument borrows some of the ideas from  the proof of Theorem~\ref{thmkz}. Since $\ff''$ is non-empty, we can find a maximal subset $U$ of the ground set that satisfies $|\ff''(U)|\ge r^{-|U|}|\ff''|$. Note that the family $\ff''(U)$ is $r$-spread. Informally, our aim is to show that
  $$|\cc^{(k)}(T,T\cup U)\setminus \ff(T,T\cup U)|\gg |\ff''|.$$
  In order to do so, we proceed in two steps. The first step is to show that   $|\cc^{(k)}(T,T\cup U)|\gg |\ff''|.$ The second step is to show that a large portion of the sets in   $\cc^{(k)}(T,T\cup U)$ cannot belong to $ \ff(T,T\cup U).$

Let us do the first step. We have to consider two cases. The first is that $|U|\le q$. In this case, we use \eqref{eq13} as an upper bound on $|\ff''|$. At the same time, Lemma~\ref{lem4} implies $$|\cc^{(k)}(T,U\cup T)|\ge \Big(1-\frac{k-t}{n-|U|-t}\Big)^{|U|}|\cc^{(k)}(T)|\ge \Big(1-\frac{k-t}{n-q-t}\Big)^{q}|\cc^{(k)}(T)|\ge \frac 12|\cc^{(k)}(T)|,$$
where the last inequality is due to our choice of parameters.
Thus, we get that
\begin{equation}\label{eq14}\frac{|\cc^{(k)}(T,T\cup U)|}{|\ff''|}\ge 2^{2^{-19}\frac n{k\log_2 \frac nk}-1}.\end{equation}
If $|U|\ge q+1$ then we have the following analogue of \eqref{eq13}:
\begin{multline}\label{eq15}|\ff''|\le r^{|U|}r_0^{t-|U|}|\cc^{(k)}(T)| =  2^{-|U|+q+1} r^{q+1}r_0^{t-q-1} |\cc^{(k)}(T)|\\ \le 2^{-|U|+q+1}  2^{-2^{-19}\frac n{k\log_2 \frac nk}}|\cc^{(k)}(T)|.\end{multline}
At the same time, Lemma~\ref{lem4} implies \begin{multline*}|\cc^{(k)}(T,U\cup T)|\ge \Big(1-\frac{k-t}{n-|U|-t}\Big)^{|U|}|\cc^{(k)}(T)|\ge \\ \Big(1-\frac{k-t}{n-|U|-t}\Big)^{q+1}\cdot \Big(1-\frac{k-t}{n-|U|-t}\Big)^{|U|-q-1}|\cc^{(k)}(T)|\ge \frac 12\cdot 2^{(-|U|+q+1)/2}|\cc^{(k)}(T)|.\end{multline*}
(Note that the last inequality is very loose.) Thus, we can see that \eqref{eq14} holds for $|U|\ge q+1$ as well.

We go on to the second step. First, consider the family $\ff''(U)$. It is $\frac{n}{2k}$-spread. Then, $\ff''(U,T\cup U)$ is $(\frac{n}{2k}-t)$-spread (cf. the proof of Theorem~\ref{thmkz}). The families $\ff''(U, T\cup U)$ and $\ff(T, T\cup U)$ are cross-intersecting.
Assume that $|\ff(T, T\cup U)|\ge \frac 12 |\cc^{(k)}(T,T\cup U)|$. Note that  $\cc(T,T\cup U)$ is a simplicial complex of rank at least $n-(t+|U|)\ge n-t-k$, and that $\cc^{(k)}(T,T\cup U)$ consists of all sets of size $k-t$ from that complex. Thus, by Lemma~\ref{lemspread}, $\cc^{(k)}(T,T\cup U)$ is $\frac{n-k-t}{k-t}$-spread. From here and the assumption on  the size of $\ff(T, T\cup U)$, we get that $\ff(T, T\cup U)$ is $\frac{n-k-t}{2(k-t)}$-spread.    We use Theorem~\ref{thmtao} with $m = \log_2(2k), \delta = \frac 1{2\log_2(2k)},$ and produce a random $2$-coloring of  $[N]\setminus (T\cup U)$. As in the proof of Theorem~\ref{thmkz}, we conclude that with probability $>1/2$ the first color contains a set from $\ff(T, T\cup U)$ and with probability $>1/2$ the second color contains a set from $\ff''(U, T\cup U)$. Together, this contradicts the cross-intersecting property. (In order for these two properties to hold, we need the spreadness of both families to be large, concretely, we need  $\frac n{2k}-t> 2^{10}\delta^{-1} = 2^{11}\log_2(2k)$ and $\frac{n-k-t}{2(k-t)}>2^{11}\log_2(2k)$. Both inequalities are valid in our assumptions on $n$.)

That is, $|\ff(T, T\cup U)|< \frac 12 |\cc^{(k)}(T,T\cup U)|,$ and, combining with \eqref{eq14} we get that
\begin{equation}\label{eq16}\frac{|\cc^{(k)}(T,T\cup U)\setminus \ff(T,T\cup U)|}{|\ff''|} \ge \frac{|\cc^{(k)}(T,T\cup U)|}{2|\ff''|}\ge 2^{2^{-19}\frac n{k\log_2 \frac nk}-2}> 1.\end{equation}
Denote $m:=|\ff''|$. Then by \eqref{eq14} $|\ff|\le |\cc^{(k)}(T)|+m - m2^{2^{-19}\frac n{k\log_2 \frac nk}-2}\le |\cc^{(k)}(T)|- m2^{2^{-20}\frac n{k\log_2 \frac nk}}$. The last inequality is valid due to our choice of $n$.  This concludes the proof of the main part and the stability part of the theorem.
\end{proof}

\end{document}